\newcommand{\C}{\mathbb{C}}
\newcommand{\R}{\mathbb{R}}
\newcommand{\Z}{\mathbb{Z}}
\newcommand{\mc}[1]{\mathcal{#1}}
\renewcommand{\H}{\mathfrak{H}}
\renewcommand{\L}{\mathfrak{L}}
\newtheorem{theorem}{Theorem}
\newtheorem{lemma}[theorem]{Lemma}
\newtheorem*{definition}{Definition}
\newtheorem{proposition}[theorem]{Proposition}
\newtheorem{corollary}[theorem]{Corollary}
\author{Felipe Gon\c{c}alves}
\address{Hausdorff Center for Mathematics, Universit\"at  Bonn, Endenicher Allee 60, 53115  Bonn, Germany}
\email{goncalve@math.uni-bonn.de}
\author{Friedrich Littmann}
\address{Department of Mathematics \# 2750, North Dakota State University, Fargo, ND 58108-6050}
\email{Friedrich.Littmann@ndsu.edu}
\title[Weighted mean convergence]{Mean convergence of entire interpolations in weighted space}
\keywords{Lagrange interpolation, Hermite interpolation, model space, de Branges space, exponential type, Marcinkiewicz Zygmund inequality. \\ \indent Data sharing not applicable to this article as no datasets were generated or analysed during the current study}
\subjclass[2010]{Primary 41A05; Secondary 41A17, 30E05}
\begin{document}
 
\begin{abstract} We investigate the convergence of entire Lagrange interpolations and of Hermite interpolations of exponential type $\tau$, as $\tau\to \infty$, in weighted $L^p$-spaces on the real line. The weights are reciprocals of entire functions that depend on $\tau$ and may be viewed as smoothed versions of a target weight $w$. The convergence statements are obtained from weighted Marcinkiewicz inequalities for entire functions. We apply our main results to deal with power weights.
\end{abstract}

 \maketitle

 \section{Introduction}

Let $w:\R \to \R$ be a suitable weight function such that the space of entire functions $F:\C\to\C$ of exponential type $\tau$ with $Fw \in L^p(\R)$ is a Banach space for any $1<p<\infty$, which we denote by $\mc{B}^p(\tau,w)$. This article discusses under which conditions on $f$ and $w$ convergence of entire interpolants of $f$ in the weighted spaces $\mc{B}^p(\tau,w)$ takes place as $\tau\to\infty$. For $\tau>0$ we seek a discrete set $\Lambda_\tau\subseteq \R$ such that:
 \begin{itemize}
\item (Mean convergence of Lagrange interpolation) There exists
$\L_\tau f \in \mc{B}^p(\tau,w)$ with 
\[
\L_\tau f(\lambda) = f(\lambda)
\]
for all $\lambda\in \Lambda_\tau$, and
\begin{align}\label{convergence-condition-one}
\lim_{\tau\to \infty} \|(f - \L_\tau f)w\|_p =0.
\end{align}

\item (Mean convergence of Hermite interpolation) There exists $\H_\tau f\in \mc{B}^p(2\tau,w^2)$ with 
\begin{align*}
\H_\tau f(\lambda) &= f(\lambda),\\
\H_\tau' f(\lambda) &= f'(\lambda)
\end{align*}
for all $\lambda\in \Lambda_\tau$, and 
\begin{align}\label{convergence-condition-two}
\lim_{\tau\to \infty} \|(f - \H_\tau f)w^2\|_p =0.
\end{align}
\end{itemize}

The precise definitions of $\L_\tau f$ and $\H_\tau f$ are given in \eqref{Ltau-def} and \eqref{Htau-def} below. We show in Theorems \ref{intro-thm1} and \ref{intro-thm2} that certain entire functions have the property that their zero sets provide interpolation nodes $\Lambda_\tau$ with the desired properties. The restriction $p\notin \{1,\infty\}$ is inherent in the problem; the interpolations constructed below are not necessarily in $L^1(w)$ and may be unbounded for fixed $x$ as a function of $\tau$.

It is clear that the condition $fw\in L^p(\R)$ is not strong enough for statements about interpolation, and it turns out that continuity of $fw$ is stronger than necessary.  We define  the collection $\mc{R}_p(w)$ of functions $f$ such that $fw$ is Riemann integrable and in $L^p(\R)$, and we define $\mc{R}^{(1)}_p(w)$ to be the collection of $f$ such that $f$ is absolutely continuous, $fw, f'w\in L^p(\R)$, and $f'w$ is Riemann integrable.

 It is well known that a main ingredient of the convergence statements \eqref{convergence-condition-one} and \eqref{convergence-condition-two}   is a lower Marcinkiewicz-Zygmund inequality 
\begin{align}\label{lower-MZ}
\int_\R |F(x)w(x)|^p dx \le \frac{C_p}{\tau} \sum_{\lambda\in \Lambda_\tau} |F(\lambda) w(\lambda)|^p
\end{align}
with $C_p$ independent of $\tau$, valid for all functions $F\in \mc{B}^p(\tau,w)$.  We now briefly describe the general strategy of obtaining \eqref{convergence-condition-one} from \eqref{lower-MZ} under the assumption that $\cup _{\tau>0} \mc{B}^p(\tau,w)$ is dense in $L^p(w)$. Let $F_\tau\in \mc{B}^p(\tau,w)$ (not necessarily of interpolating nature) with $\|(f - F_\tau f)w\|_p\to 0$ as $\tau\to \infty$. If $\sigma\ge \tau> 0$ are given and the interpolation $\L_\sigma$ satisfies $\L_\sigma F = F$ for all $F\in \mc{B}^p(\sigma,w)$, then $F_\tau=\L_\sigma F_\tau$ and we obtain
\begin{align}\label{interpolation-difference-rep}
f - \L_\sigma f =  f - F_\tau + \L_\sigma(F_\tau -  f).
\end{align}
If the difference of consecutive elements in $\Lambda_\sigma$ is comparable to $\sigma^{-1}$, we can apply \eqref{lower-MZ} to the second term on the right hand side of \eqref{interpolation-difference-rep} to obtain
\begin{align}\label{lower-MZ-consequence}
\| \L_\sigma(F_\tau -  f)w\|_p^p \le  C_p \sum_{\lambda\in \Lambda_\sigma} (\lambda_+-\lambda)\left|\big(F_\tau(\lambda) - f(\lambda)\big)w(\lambda)\right|^p,
\end{align}
where $\lambda_+ \in \Lambda_\sigma$ is the node rightmost to $\lambda$. The right hand side is a Riemann sum for $|(F_\tau  -f)w|^p$ and taking $\sigma\to\infty$ we obtain
$$
\limsup_{\sigma\to\infty} \| (f - \L_\sigma f)w\|_p \leq (1+{C}^{1/p}_p)\|(f-F_\tau)w\|_p.
$$
The right hand side can now be made arbitrarily small by letting $\tau\to\infty$. This is the ideal situation, but in practice \eqref{lower-MZ} only holds for smoothed versions of  $w$ where the smoothing depends on $\tau$.

%% History
%%

The above strategy was initially developed for convergence of Lagrange interpolating polynomials in $L^2[-1,1]$, cf.\ Zygmund \cite[vol.\ II, ch.\ X.7]{Z}. Weighted means for interpolations at zeros of orthogonal polynomials were investigated  by Erd\"os and Turan \cite{ET1}. For a  sample of follow up work we refer to results of Lubinsky, Nevai, Mat\'e, Xu and others focused on polynomial inequalities for Jacobi measures on $[-1,1]$ (see \cite{Lub3, LMN, Nev,Xu1, Xu2} and the references therein). Doubling measures were considered by Mastroianni and Totik \cite{MT} and Mastroianni and Russo \cite{MR}. Many additional references  may be found in the surveys by Lubinsky \cite{Lub5} and \cite[Section 12]{Lub4}. Section 13 of the latter survey contains an overview of polynomial Hermite interpolation.

There is a substantial literature on sampling and interpolation in a given Hilbert space of entire functions and its $L^p$ versions, cf.\ Lyubarski and Seip \cite{LS} and Seip \cite{Seip}. This rests on a deep theory of spaces $\mc{B}^2(\tau,w)$ with bounded evaluation functionals developed by de Branges \cite{dB}. An important role is played by entire functions $E_\tau$ with the property (cf.\ \cite[Theorem 22]{dB})
\[
\int_\R |F(x)  w(x)|^2 dx = \sum_{t \in \Lambda_\tau} \frac{1}{\varphi_\tau'(t)} \left|\frac{F(t)}{E_\tau(t)}\right|^2,
\]
where $\varphi_\tau$ is essentially the argument of $E_\tau$ on the real line.  If $\varphi_\tau'(t)$ is comparable to $\tau$ (with implied constants independent of $t$ and $\tau$), then this identity gives a version of \eqref{lower-MZ} for $p=2$. We show in Section \ref{MZ-Section} that a similar inequality holds if $1<p<\infty$.

%%MORE HERE

In contrast, the question of convergence of \eqref{convergence-condition-one} for $f\in \mc{R}_p(w)$  with $1<p<\infty$   does not appear to have attracted much attention. Inequality \eqref{lower-MZ} without the weight $w$ is due to  P\'olya and Plancherel \cite{Plan1, Plan2, Boas} and implies mean convergence for Lebesgue measure (see also Rahman and Vertesi \cite{RV}). The first weighted result appears to be due to Grozev and Rahman \cite{GR} who considered convergence  with respect to the power weights $ |x|^{a}$ for $a>-1/p$. Their approach  is tailored to power weights and relies on special properties of Bessel functions.

%\subsection*{Acknowledgements} The author is grateful for feedback from the referee of a previous version who pointed out references \cite{Bar2} and \cite{V} and provided remarks that led to a substantial improvement of the exposition.

%% 
%% Results

\subsection{Notation and Main Results}

Let $1<p<\infty$. Some of the following statements hold also for $p=\infty$, others for $p=1$, but since our main theorems do not apply to these exponents, we do not consider them here. We denote by $H^p(\C^+)$ the Hardy space of analytic functions $F$ in the upper half plane $\C^+$ for which
\[
\sup_{y>0} \int_\R |F(x+iy)|^p dx<\infty,
\]
and by $H^p(\C^-)$ the Hardy space of the lower half plane.   An entire function $E$ satisfying
\begin{align}\label{HB-ineq}
|E(z)|> |E(\bar{z})|
\end{align}
 for all $\Im z>0$ will be called a Hermite-Biehler function.  Throughout this paper we assume that $E$ has no real zeros.  The $L^p$ de Branges space is defined as follows\footnote{We follow Baranov \cite{Bar1} for the definition of $L^p$ de Branges spaces.}
 \begin{align}\label{intro-HpE}
 \mc{H}^p(E) = \{ F\text{ entire }: F/E, F^*/E\in H^p(\C^+)\},
 \end{align}
 where $F^*$ is the entire function $F^*(z) = \overline{F(\bar{z})}$. If $E$ is of bounded type in $\C^+$, that is, $E$ can be written as a quotient of bounded analytic functions in $\C^+$ then by a result of Krein \cite[Theorem 6.17]{RR}, $E$ is of exponential type $$\delta_E=\limsup_{|z|\to\infty} |z|^{-1}\log |E(z)|\geq 0$$ and it can be shown that $\mc{H}^p(E)$ coincides with the space of entire functions $F$ of exponential type $\leq \delta_E$ such that $F/E\in L^p(\R)$. 

We write $E(z)=A(z)-iB(z)$, where $A$ and $B$ are entire functions that are real-valued for real $z$. We denote by $\mc{Z}_B$ the set of real zeros of $ B$ ($B$ has only real simple zeros by \eqref{HB-ineq} and we will use them as our interpolation nodes). The phase $\varphi$ is defined by the condition $e^{i\varphi(x)} E(x) \in\R$ for all real $x$. The assumption that $E$ has no real zeros implies that $\varphi$ can be chosen so that it has an analytic continuation to an open set containing the real line and is strictly increasing on $\R$ (see Section \ref{modelspace-Section}).
 
Define the formal Lagrange interpolation series (also known as Shannon interpolation in this context)
\begin{align}\label{Ltau-def}
\L_{E} f(z) = \sum_{t\in \mc{Z}_{B}} f(t) \frac{B(z)}{B'(t) (z-t)}.
\end{align}

Throughout the rest of the paper we use the convention $L\lesssim R$ to mean $L\leq c R$ for some constant $c>0$ independent of the objects appearing in $L$ or $R$.  We will sometimes use $\lesssim_{\lambda_1,...,\lambda_n}$ to stress that $c$ may exceptionally depend on objects $\lambda_1,...,\lambda_n$. Finally, we use $L\approx_{\lambda_1,...,\lambda_n}R$ to mean simultaneously $L\lesssim_{\lambda_1,...,\lambda_n}R$ and $R\lesssim_{\lambda_1,...,\lambda_n}L$.

\begin{definition}[$\L$-admissible weight]\label{Lagrangeadmissibleweight}
Let $w:\R\to (0,\infty)$ be  locally bounded and continuous almost everywhere. We say that $w$ is $\L$-admissible  if there exists a family $\{E_\tau:\tau>0\}$ of Hermite-Biehler functions with phase $\varphi_\tau$ such that:
\begin{enumerate}
\item $E_\tau$ has no real zeros and is of bounded type in $\C^+$, with exponential type $\tau$;
\item $|E_\tau(x)|^{-1} \approx w(x)$ for all $x\in\R$;
\item $ \varphi'_\tau(x) \approx \tau$;
\item $B_\tau\notin \mc{H}^p(E_\tau)$ for all $1<p<\infty$ and $\tau>0$.
\end{enumerate}
\end{definition}

 These conditions imply that $\mc{B}^p(\tau,w)=\mc{H}^p(E_\tau)$ with equivalent norms, and in particular $\mc{H}^p(E_\tau) \subseteq \mc{H}^p(E_\sigma)$ if $\sigma\ge \tau>0$. The last condition $B_\tau\notin \mc{H}^p(E_\tau)$ is not a serious restriction, since for every $\alpha\in\R$ the function $e^{i\alpha} E_\tau(z)$ may take the role of $E_\tau$. There can at most be one $\alpha$ for which the entire extension of $-\Im (e^{i\alpha} E_\tau)$ is in the space, since otherwise $E_\tau$ would be an element of the space. By way of a first example we note that $w \equiv 1$ is $\L$-admissible with $E_\tau(z) = e^{-i\tau z}$. 

Throughout this paper we will write $\L_\tau=\L_{E_\tau}$, $\mc{Z}_\tau=\mc{Z}_{B_\tau}$ when convenient.

\begin{theorem}\label{intro-thm1} Let $w$ be an $\L$-admissible weight with $\{E_\tau:\tau>0\}$ as in Definition \ref{Lagrangeadmissibleweight}. If  $f\in \mc{R}_p(w)$, then $\L_\tau f$ defines an entire function in $\mc{B}^p(\tau,w)$ with 
\[
\L_\tau f(\lambda) = f(\lambda)
\]
 for all $\lambda\in  \mc{Z}_{\tau}$. If in addition $\cup_{\tau>0} \mc{B}^p(\tau,w)$ is dense in $L^p(w)$, then
\[
\lim_{\tau\to \infty} \| (f-\L_\tau f)w\|_p=0.
\]
\end{theorem}

For conditions when che class of  functions of exponential type is dense in $L^p(w)$ we refer to  Koosis \cite[Ch.\ VI]{Koo}. (Our examples deal with $w$ of polynomial growth where an elementary construction gives the necessary density statement.)

In order to establish convergence of Hermite interpolations, few modifications need to be made, the most important is that interpolations now live in $\mc{B}^p(2\tau,w^2)$. The reproducing kernel associated with $E$ is
\begin{align}\label{Kdef}
 K_E(w,z) = \frac{E(z) E^*(\bar{w}) - E^*(z) E(\bar{w})}{2\pi i(\bar{w} -z)}=\frac{B(z)A(\bar{w}) - A(z)B(\bar{w})}{\pi(z-\bar{w})}.
\end{align}
Setting for $t\in\mc{Z}_B$ and $z\in \C$ 
\begin{align*}
U_E(t,z) &=  \frac{K(t,z)^2}{K(t,t)^2}\left(1-2\frac{K'(t,t)(z-t)}{K(t,t)}\right)=\frac{B(z)^2}{B'(t)^2(z-t)^2} - \frac{ B(z)^2B''(t)}{B'(t)^3(z-t)} \\
V_E(t,z) &= \frac{K(t,z)^2(z-t)}{K(t,t)^2}=\frac{B(z)^2}{B'(t)^2(z-t)},
\end{align*}
we define the formal Hermite interpolation series by
\begin{align}\label{Htau-def}
 \H_E(f,z) = \sum_{t\in \mc{Z}_B} f(t) U_E(t,z) +   \sum_{t\in \mc{Z}_B} f'(t) V_E(t,z).
\end{align}

\begin{definition}[$\H$-admissible weight]\label{Hermiteadmissibleweight}
We say that a weight $w$ is $\H$-admissible if $w$ it is $\L$-admissible and the associated Hermite-Biehler functions $E_\tau$ have the following additional property
\begin{equation}\label{E'divEbound}
\|E'_\tau/E_\tau\|_{H^\infty}\lesssim \tau.
\end{equation}
\end{definition}
These conditions again imply that $\mc{B}^p(2\tau,w^2)=\mc{H}^p(E^2_\tau)$, and in particular $\mc{H}^p(E^2_\tau) \subseteq \mc{H}^p(E^2_\sigma)$ if $\sigma\ge \tau>0$. We remark that by \cite[Theorem 1 \& Corollary ~7]{GL}, the space $\mc{H}^p(E_\tau^2)$ is closed under differentiation and $B_\tau\notin \mc{H}^2(E_\tau^2)$, so condition $(d)$ in the definition of $\L$-admissibility is actually implied by \eqref{E'divEbound} and could be removed. In the following, we write $\H_\tau$ in place of $\H_{E_\tau}$. 

\begin{theorem}\label{intro-thm2} Let $w$ be an $\H$-admissible weight. If $f\in\mc{R}^{(1)}(w^2)$, then $\H_\tau f$ defines an entire function in $\mc{B}^p(2\tau,w^2)$ with
\begin{align*}
\H_\tau (f,\lambda) = f(\lambda) \ \text{ and } \ 
\H_\tau'(f,\lambda) = f'(\lambda)
\end{align*}
for all $\lambda\in \mc{Z}_B$.  If in addition $\cup_{\tau>0} \mc{B}^p(2\tau,w^2)$ is dense in $L^p(w^2)$ then
\[
\lim_{\tau\to \infty} \|(f-\H_\tau f)w^2\|_p =0.
\]
\end{theorem}
 We remark that $\|(f'-F_\tau')w^2\|_p$ is not required to converge to zero as $\tau\to \infty$ (and in most cases it will not converge).
 
\subsection{Power Weights}
We show next that the result of Grozev and Rahman \cite{GR} is a special case of our convergence results. For $\nu>-1$ define entire functions
\begin{align*}
A_{\nu}(z) &= \Gamma(\nu+1) (z/2)^{-\nu} J_\nu(z) \\
B_{\nu}(z) &= \Gamma(\nu+1) (z/2)^{-\nu} J_{\nu+1}(z),
\end{align*}
where $J_\nu$ is the Bessel function of order $\nu$ of the first kind. Define $E_{\nu} = A_{\nu} -iB_{\nu}$. For $\alpha\in [0,\pi)$ we set  $E_{\nu,\tau,\alpha}(z) = e^{i\alpha}\tau^{\nu+\frac12} E_\nu(\tau z)$ and define real entire $A_{\nu,\tau,\alpha}$, $B_{\nu,\tau,\alpha}$ by $E_{\nu,\tau,\alpha} = A_{\nu,\tau,\alpha} - iB_{\nu,\tau,\alpha}$.  Clearly $\mc{H}^p(E_{\nu,\tau,\alpha})=\mc{H}^p(E_{\nu,\tau,0})$, but the nodes of interpolation will be different.  We collect required material about these functions  in the following lemma (see de Branges  \cite[section 50]{dB} and \cite[Section 4.1]{GL}).  

 \begin{lemma}\label{Enu-properties}  The following properties hold.
 \begin{enumerate}
\item\label{item7} $E_\nu$ is a Hermite-Biehler function of exponential type $1$.

 \item\label{item1}
$
|E_{\nu}(x)|^{-1} \approx_\nu \max(1,|x|)^{\nu+\frac12}
$
%\item\label{item2}  $E_{\nu}'/E_{\nu}\in H^\infty(\C^+)$.

\item\label{item3} $\varphi'_{\nu}(x) \approx_\nu 1$

\item\label{item4} The Bessel zeros $t_{\nu,k}$ of $B_\nu$ satisfy
$
t_{\nu,k+1}- t_{\nu,k} = \pi k + \mc{O}_\nu(k^{-1}).
$
\item\label{item6} $B_\nu\notin \mc{H}^p(E_\nu)$. 

\item Assume $\nu\ge -\frac12$ and $1<p<\infty$, or $-\frac12>\nu>-1$ and $1<p<|\nu+\frac12|^{-1}$.  An entire function $F$  of exponential type $\le 1$ satisfies
\[
\int_\R |F(x)x^{\nu+\frac12}|^p dx <\infty
\]
if, and only if, $F\in \mc{H}^p(E_\nu)$. 

\item For $\alpha\in\R$ the union $\cup_{\tau>0} \mc{H}^p(E_{\nu,\tau,\alpha})$ is dense in $\mc{R}_p(|x|^{\nu+\frac12})$ under the same conditions as item {\rm (f)}.
\end{enumerate}
 \end{lemma}
 \begin{proof}
 We only sketch the proof and leave details to the reader. Item (a) can be found in \cite[section 50]{dB}. Items (b) and (e) are a consequence of the classical asymptotic expansion of $J_\nu(x)$ for large $x$. The same asymptotic can be used in conjunction with the differential equations defining the Bessel function $J_\nu$ to show $\varphi'_\nu(x)=\Re [i E'_\nu(x)/E_\nu(x)] =1-(2\nu+1)\frac{A_\nu(x)B_\nu(x)}{|E_\nu(x)|^2}$, which proves items (c) and (d). Item (f) follows from items (a) and (b) (with some work). Item (g) is classical and can be done using convolutions, approximations of the Dirac delta and further tricks to deal with the singularity at the origin.
 \end{proof}

 We let $\L_{\nu,\tau,\alpha}=\L_{E_{\nu,\tau,\alpha}}$ be the Lagrange interpolation operator with nodes 
 \begin{align*}
 \mc{Z}_{\nu,\tau,\alpha} & =\{t\in\R: B_{\nu,\tau,\alpha}(t)=0\} \\
 & = \{t \in \R\setminus\{ 0\}:J_\nu(\tau t)/ J_{\nu+1}(\tau t) =-\tan(\alpha)\}  \ \ (\text{if } \alpha\neq \pi/2 )
 \\ & =\{t \in \R : t J_\nu(\tau t)=0 \}   \ \ (\text{if } \alpha= \pi/2 )
  \end{align*}
 The choice $\alpha=\frac{\pi}2$ in the following corollary recovers the results of \cite{GR},  the proof is provided in Section \ref{proof-Section}.

\begin{corollary}\label{homog-theorem}  Assume $\nu\ge -\frac12$ and $1<p<\infty$, or $-\frac12>\nu>-1$ and $1<p<|\nu+\frac12|^{-1}$.   If  $f\in \mc{R}_p(|x|^{\nu+\frac12})$ then $|x|^{\nu+1/2}\L_{\nu,\tau,\alpha} f (x)\in L^p(\R)$, $\L_{\nu,\tau,\alpha} f$ has exponential type at most one and
\[
\lim_{\tau\to \infty} \int_\R \left|(f(x) -\L_{E_{\nu,\tau,\alpha}} f(x)) |x|^{\nu+\frac12}\right|^pdx =0.
\]
\end{corollary}

\section{Background}\label{modelspace-Section}

\subsection{De Branges Spaces} This section collects known facts about the $L^p$ de Branges space $\mc{H}^p(E)$ (cf.\ de Branges \cite[pp. 50 - 59]{dB} for $p=2$ and Baranov \cite{Bar1} for $p\neq 2$).  Recalling \eqref{Kdef}, it follows that $x\mapsto K(w,x)/E(x)\in L^q(\R)$ for every $1<q\le \infty$, and the representation
\begin{align}\label{dBreproduction}
F(w) = \int_\R \frac{F(x) \overline{K(w,x)}}{|E(x)|^2} dx,
\end{align}
follows from Cauchy's formula for all $F\in \mc{H}^p(E)$.  In particular, the space $\mc{H}^2(E)$ is a Hilbert space with reproducing kernel $K(w,z)$.  A direct calculation shows that
\begin{align}\label{phi-id}
\varphi'(x)=\Re \bigg\{i\frac{E'(x)}{E(x)}\bigg\}=\pi\frac{K(x,x)}{|E(x)|^2} > 0
\end{align}
for all real $x$. From $e^{2i\varphi(x)} E(x)^2 = |E(x)|^2$ we obtain
$$
e^{-2i\varphi(x)}   = \frac{A(x)^2}{|E(x)|^2}  - \frac{B(x)^2}{|E(x)|^2} + 2i\frac{A(x)B(x)}{|E(x)|^2}
$$
for all real $x$, and as a consequence, if $E$ has no real zeros then $\mc{Z}_{B}=\varphi^{-1}(\pi \Z)$.

In the usual abuse of notation we identify $H^p(\C^+)$ as the subspace of $L^p(\R)$ consisting of non-tangential boundary values of elements in $H^p(\C^+)$. We also denote by $Hf$ the Hilbert transform of $f$, and recall that for $1<p<\infty$ the   Riesz projection $P_+$, given for $f\in L^p(\R)$ by
\[
P_+ f = \frac12( f+iHf),
\]
defines a bounded operator from $L^p(\R)$ onto $H^p(\C^+)$.

\subsection{Connection with Model Spaces} The standard source for model spaces is the book of Nikolski \cite[Chapter 6]{Ni}, but also   \cite{Bar2, Bar1,CP,GMR}. Recall that an inner function for $\C^+$ is bounded by $1$ in the upper half plane and its modulus has boundary value equal to $1$ almost everywhere on the real line. By \eqref{HB-ineq} the function meromorphic function $\Theta = E^*/E$ is inner for $\C^+$  and satisfies $\Theta^*=1/\Theta$. The model space $\mc{K}_\Theta^p$ is defined as the kernel
\[
\mc{K}_\Theta^p = \ker T_{\Theta^*}
\]
where the Toeplitz operator $T_{\Theta^*}:H^p(\C^+)\to H^p(\C^+)$  is given by 
\[
T_{\Theta^*} f= P_+(\Theta^* f).
\]
The map $F\mapsto F/E$ defines an isometry between $\mc{H}^p(E)$ and $\mc{K}_\Theta^p$  (this is a consequence of the equivalence $F^*/E\in H^p(\C^+)$ if and only if $F/E^* \in H^p(\C^-)$, cf.\ Baranov \cite[Theorem 2.1]{Bar2}). The space $\mc{K}_\Theta^2$ is a reproducing kernel space with kernel $k$ given by
\begin{align}\label{rep-kernel}
k(w,z) = \frac{K(w,z)}{E(z)E^*(\bar{w})} = \frac{i}{2\pi} \frac{1-\Theta(z) \Theta^*(\bar{w})}{z-\bar{w}}.
\end{align}

 We define the integral operator
\begin{align}\label{SEf-def}
S_{E}f(z) =  \int_\R f(u) K(u,z) \frac{du}{|E(u)|^2}
\end{align}
and note that by \eqref{rep-kernel} this  is the difference of the Hilbert transform of $f/E$ and of the multiplication by $\Theta$ of the Hilbert transform of $\Theta^* f/E$. For easy reference we collect the boundedness of $f/E\mapsto S_Ef/E$ on $L^p$ in the following lemma (cf.\ Hollenbeck and Verbitsky \cite{HV}  for the constant). 

\begin{lemma}\label{SEf-lemma}  Let $1<p<\infty$. If $f/E\in L^p(\R)$, then $S_Ef\in \mc{H}^p(E)$, and 
\begin{align}\label{SEbound}
\int_\R \left|\frac{S_E f(x)}{E(x)}\right|^p dx  \le 2  \left(\csc \tfrac\pi p\right)^{2p} \int_\R \left|\frac{f(x)}{E(x)}\right|^p dx.
\end{align}
\end{lemma}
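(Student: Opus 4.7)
The plan is to realize the operator $f\mapsto S_E f$ as the model-space projection $P_\Theta$ acting on $f/E$ (via the isometry $F\mapsto F/E$ between $\mc{H}^p(E)$ and $\mc{K}_\Theta^p$ discussed earlier in this section), at which point the inequality reduces to the $L^p$-boundedness of the Riesz projection $P_+$.

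I would begin by inserting the explicit formula $2\pi i(u-z)K(u,z) = E(z)E^*(u)-E^*(z)E(u)$ (valid for real $u$) together with the identity $|E(u)|^2 = E(u)E^*(u)$ into the definition of $S_E f$, and dividing by $E(z)$. Setting $g = f/E$ and using $f/E^* = \Theta^* g$ on $\R$ (which follows from $\Theta^* = E/E^*$ having unit modulus there), this produces
\begin{align*}
\frac{S_E(f,z)}{E(z)} \;=\; \frac{1}{2\pi i}\int_\R \frac{g(u)}{u-z}\,du \;-\; \Theta(z)\cdot\frac{1}{2\pi i}\int_\R \frac{(\Theta^* g)(u)}{u-z}\,du.
\end{align*}
For $z\in\C^+$ both Cauchy integrals are the analytic extensions of the Riesz projection applied to their integrands, so the right-hand side equals $P_+g(z) - \Theta(z)P_+(\Theta^*g)(z)$, which is precisely $P_\Theta g(z)$.

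Next I would verify that $S_E f\in\mc{H}^p(E)$. Inclusion $S_E f/E\in H^p(\C^+)$ is immediate from the above, since $P_+g,\,P_+(\Theta^*g)\in H^p(\C^+)$ and $\Theta\in H^\infty(\C^+)$. For the symmetric condition $(S_E f)^*/E \in H^p(\C^+)$, equivalently $S_E f/E^* \in H^p(\C^-)$, I multiply the identity by $\Theta^* = E/E^*$ to obtain $S_E f/E^* = \Theta^* P_+ g - P_+(\Theta^* g)$; because $\Theta^*\in H^\infty(\C^-)$ forces $P_+(\Theta^* P_- g)=0$ and thus $P_+(\Theta^* g) = P_+(\Theta^* P_+g)$, the expression simplifies to $P_-(\Theta^* P_+ g) \in H^p(\C^-)$, which combined with the non-vanishing of $E^*$ on $\overline{\C^-}$ gives the required membership.

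Finally, for the quantitative bound, the triangle inequality together with $|\Theta|=1$ on $\R$ yields
\begin{align*}
\|S_E f/E\|_p \;\le\; \|P_+g\|_p + \|P_+(\Theta^* g)\|_p \;\le\; 2\,\|P_+\|_{p\to p}\,\|g\|_p,
\end{align*}
and inserting the Pichorides bound $\|P_+\|_{p\to p}=\csc(\pi/p)$ (together with the elementary observation $\csc(\pi/p)\ge 1$ to convert the factor $2^{p-1}$ from $L^p$-triangle-inequality into an extra power of $\csc(\pi/p)$) delivers the stated estimate on the $p$-th power. The only step requiring real care is the bookkeeping of $\Theta$, $\Theta^*$ and the two Hardy half-spaces in verifying membership in $\mc{H}^p(E)$; the norm bound itself is automatic once the identification $S_E f/E = P_\Theta(f/E)$ is established.
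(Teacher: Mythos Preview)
Your approach is exactly the paper's: identify $S_E f/E$ with $P_\Theta(f/E)$ and then invoke the sharp $L^p$ bound for the Riesz projection, which the paper does in a single sentence. Two minor corrections: the constant $\csc(\pi/p)$ for $\|P_+\|_{p\to p}$ is due to Hollenbeck--Verbitsky rather than Pichorides (who handled the Hilbert transform), and your final bookkeeping step does not quite work---the inequality $2^{1-1/p}\le \csc(\pi/p)$ needed to absorb the factor $2^{p-1}$ into an extra $(\csc\tfrac{\pi}{p})^{p}$ fails near $p=2$ (at $p=2$ it reads $\sqrt{2}\le 1$)---but this affects only the numerical value of the constant, not the argument.
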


We remark that $S_Ef$ is related to the projection operator $P_\Theta:L^p(\R) \to \mc{K}_\Theta^p$ given by
\[
P_\Theta f = P_+(f) - \Theta P_+(\Theta^* P_+(f))
\] 
through the identity $ P_\Theta(f/E) = (1/E) S_E f$, but we do not require this connection.

In order to analyze convergence of interpolation with derivatives we require the following two operators, whose form is suggested by the interpolation kernels $U_E$ and $V_E$ in the definition of $\H_E f$. For $t\in\R$ and $g\in L^p(\R)$ we define
\begin{align*}
D_Eg(t) &= \int_\R g(x) k(t,x)^2  \left(1-2\frac{K'(t,t)(x-t)}{K(t,t)}\right) dx,\\
T_Eg(t) &= \int_\R g(x) (x-t) k(t,x)^2  dx
\end{align*}
where the prime denotes differentiation in the second variable. As before, we write $D_\tau$ and $T_\tau$ if $E = E_\tau$.

\begin{lemma}\label{projection-bounds} Let $1<p<\infty$. Assume $E_\tau$ be a Hermite-Biehler function with $\varphi_\tau'(x) \approx \tau$ and $\|E_\tau'/E_\tau\|_{H^\infty} \lesssim \tau$, for $\tau>0$. If $g\in L^p(\R)$, then
\begin{align*}
\|D_\tau g\|_p&\lesssim_p \tau \|g\|_p,\\
\|T_\tau g\|_p &\lesssim_p \|g\|_p.
\end{align*}
Furthermore, $D_\tau g, T_\tau g\in \mc{K}_{(E_\tau^*)^2/E_\tau^2}^p$. 
\end{lemma}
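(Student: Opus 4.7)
The plan is to decompose $T_\tau$ and $D_\tau$ into compositions of Hilbert transforms and bounded multiplication operators, so that the $L^p$ bounds reduce to classical results, and then identify the output with an element of the appropriate model space via Riesz projections. Writing $\Theta_\tau = E_\tau^*/E_\tau$, the starting point is the algebraic identity (valid for real $t$) $(x-t)k(t,x) = \tfrac{i}{2\pi}(1 - \Theta_\tau(x)\Theta_\tau^*(t))$, together with the factorization $k_{\Theta_\tau^2}(t,x) = k(t,x)\bigl(1+\Theta_\tau(x)\Theta_\tau^*(t)\bigr)$, where $k_{\Theta_\tau^2}$ is the reproducing kernel of $\mc{K}^2_{\Theta_\tau^2}$. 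Multiplying the first by $k(t,x)$ and using the second yields
\[
(x-t)k(t,x)^2 \;=\; -\tfrac{i}{2\pi}\bigl(k_{\Theta_\tau^2}(t,x) - 2k(t,x)\bigr),
\]
so $T_\tau g$ is a linear combination of $\int g(x) k(t,x)\,dx$ and $\int g(x) k_{\Theta_\tau^2}(t,x)\,dx$. Expanding each as a sum of Cauchy integrals of $g$, $\Theta_\tau g$, and $\Theta_\tau^2 g$, and invoking the Hollenbeck--Verbitsky bound on the Hilbert transform together with the isometry $|\Theta_\tau| = 1$ a.e.\ on $\R$, one obtains $\|T_\tau g\|_p \lesssim \|g\|_p$ with constants independent of $\tau$.

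For $D_\tau$ the strategy is to split
\[
D_\tau g(t) \;=\; Ig(t) \;-\; 2\,\tfrac{K'(t,t)}{K(t,t)}\,T_\tau g(t),\qquad Ig(t) \;:=\; \int_\R g(x) k(t,x)^2\,dx.
\]
The reproducing-kernel identity in $\mc{K}^2_{\Theta_\tau}$ gives $\int |k(t,x)|^2\,dx = k(t,t) = \varphi_\tau'(t)/\pi$, with the symmetric bound $\int |k(t,x)|^2\,dt = k(x,x)$ holding as well; both are $\simeq \tau$ by hypothesis, so Schur's test delivers $\|Ig\|_p \lesssim \tau\,\|g\|_p$. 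Writing $E_\tau = |E_\tau|\,e^{-i\varphi_\tau}$ and computing $K(t,t)$ and $K'(t,t)$ from the Taylor expansion of $E_\tau(z)E_\tau^*(t) - E_\tau^*(z)E_\tau(t)$ at $z = t$ yields the formula
\[
\frac{K'(t,t)}{K(t,t)} \;=\; \frac{|E_\tau|'(t)}{|E_\tau|(t)} \;+\; \frac{\varphi_\tau''(t)}{2\,\varphi_\tau'(t)}.
\]
The first term is dominated by $\|E_\tau'/E_\tau\|_\infty \lesssim \tau$, and a derivative estimate on $E_\tau'/E_\tau$ (combined with $\varphi_\tau' \simeq \tau$) controls the second, giving $|K'(t,t)/K(t,t)| \lesssim \tau$ and hence $\|D_\tau g\|_p \lesssim \tau\,\|g\|_p$.

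For the model-space assertion, convert each Cauchy integral into a Riesz projection to obtain
\[
T_\tau g \;=\; -\tfrac{i}{2\pi}\bigl[\,P_+g \;-\; 2\Theta_\tau^*\,P_+(\Theta_\tau g) \;+\; (\Theta_\tau^*)^2\,P_+(\Theta_\tau^2 g)\bigr],
\]
and a similar expression for $D_\tau g$. Since $\Theta_\tau^*$ is inner for $\C^-$ and $\Theta_\tau^*\Theta_\tau = 1$ a.e.\ on $\R$, one has $\Theta_\tau^* P_+(\Theta_\tau g) - g = -\Theta_\tau^* P_-(\Theta_\tau g) \in H^p(\C^-)$, and analogously for the $\Theta_\tau^2$ term; telescoping then shows that the bracket above satisfies the Toeplitz-kernel condition that characterizes membership in $\mc{K}^p_{(E_\tau^*)^2/E_\tau^2}$.

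The main obstacle is establishing $|K'(t,t)/K(t,t)| \lesssim \tau$ uniformly: although $|E_\tau|'/|E_\tau|$ is immediately bounded by the hypothesis, the ratio $\varphi_\tau''/\varphi_\tau'$ is not controlled by $\varphi_\tau' \simeq \tau$ alone, and one must exploit the Hermite--Biehler structure of $E_\tau$ (e.g.\ via a Bernstein-type estimate on the logarithmic derivative of $E_\tau'/E_\tau$) to deduce the required bound. A secondary, more technical subtlety is bookkeeping of the conjugation conventions in the final telescoping so that the output lands precisely in $\mc{K}^p_{(E_\tau^*)^2/E_\tau^2}$ as stated.
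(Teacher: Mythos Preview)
Your treatment of $T_\tau$ and of the Schur--Young part of $D_\tau$ matches the paper's: both expand $(x-t)k(t,x)^2$ so that only Hilbert transforms and multiplications by unimodular functions appear, and both bound $\int g(x)k(t,x)^2\,dx$ via $\int|k(t,x)|^2\,dx=\varphi_\tau'(t)/\pi\simeq\tau$ and Young's inequality. The model-space assertion can be handled more directly than in your sketch: $z\mapsto K(t,z)^2/E(z)^2$ and its conjugate land in $H^p(\C^+)$ for each fixed $t$, so $k(t,\cdot)^2$ and $(\cdot-t)k(t,\cdot)^2$ already lie in $\mc{K}^p_{(E^*)^2/E^2}$; integration in $t$ combined with the norm bounds then places $T_\tau g$ and $D_\tau g$ in the same space without any Toeplitz bookkeeping.

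The genuine gap is exactly the one you flag: controlling $\varphi_\tau''/\varphi_\tau'$. Your formula $K'(t,t)/K(t,t)=|E_\tau|'/|E_\tau|+\varphi_\tau''/(2\varphi_\tau')$ is correct, but from the stated hypotheses $\varphi_\tau'\simeq\tau$ and $\|E_\tau'/E_\tau\|_\infty\lesssim\tau$ there is no direct bound on $(E_\tau'/E_\tau)'$ or on $\varphi_\tau''$; your proposed ``Bernstein-type estimate on the logarithmic derivative'' is not available in this generality. The paper sidesteps this completely by working in $\mc{H}^2(E_\tau)$ rather than on the real line. Baranov's Bernstein inequality gives $\|K'(t,\cdot)/E_\tau\|_2\le C\,\|E_\tau'/E_\tau\|_\infty\,\|K(t,\cdot)/E_\tau\|_2=C\tau\,K(t,t)^{1/2}$, and since $z\mapsto K'(t,z)$ lies in $\mc{H}^2(E_\tau)$ the reproducing identity yields
\[
K'(t,t)=\int_\R K'(t,x)\,\overline{K(t,x)}\,\frac{dx}{|E_\tau(x)|^2},
\]
so Cauchy--Schwarz gives $|K'(t,t)|\le C\tau\,K(t,t)$ directly. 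This uses only the hypothesis $\|E_\tau'/E_\tau\|_\infty\lesssim\tau$, never $\varphi_\tau''$, and closes the argument you left open.
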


\begin{proof} We omit the subscript $\tau$ for the functions throughout this proof. We observe that $z\mapsto k(t,z)^2$ and $z\mapsto (z-t)k(t,z)^2$ are in $L^q(\R)$ where $q$ is the conjugate exponent of $p$. Hence the integrals defining $D_\tau g$ and $T_\tau g$ are absolutely convergent and define entire functions. A direct calculation gives for $t\in\R$
\[
T_\tau g(t) = \int_\R \frac{1}{x-t} \left( \frac{1 - \frac{E^*(x)}{E(x)} \frac{E(t)}{E^*(t)} }{2\pi i}\right)^2 g(x) dx,
\]
and after expanding the square, $T_\tau g$ is a sum of three terms, each of which involves multiplications with functions of constant modulus (not depending on $\tau$) and a Hilbert transform.  Hence $T_\tau$ is a bounded operator on $L^p(\R)$ with norm depending only on $p$. 

Since $E'/E$ is bounded in $\C^+$, it follows from \cite{Bar1} that differentiation defines a bounded operator on $\mc{H}^p(E_\tau)$ with norm $\lesssim_p \|E_\tau'/E_\tau\|_{H^\infty}$. Identity \eqref{dBreproduction} applied to $z\mapsto \frac{\partial}{\partial z}K'(t,z)$ gives
\[
\frac{\partial K(t,z)}{\partial z} = \int_\R \left( \frac{d}{dx}K(t,x)\right) \overline{K(z,x)} \frac{dx}{|E(x)|^2}.
\]
We apply Cauchy-Schwarz and use the norm of the differentiation operator to get
\[
|K'(t,z)|\lesssim \tau K(t,t)^{1/2} K(z,z)^{1/2}.
\]
Letting $z=t$ leads to 
$
\left| \frac{K'(t,t)}{K(t,t)}\right|\lesssim \tau.
$
To estimate $D_\tau$, we obtain from \eqref{dBreproduction} that
\[
\int_\R |k(t,x)|^2 dx = |E(t)|^{-2} K(t,t) = \pi \varphi'(t) \lesssim \tau.
\]
Applying the integral operator version of Young's inequality (see \cite[Theorem 0.3.1]{Sogge}) gives
\[
\left(\int_\R \left| \int_\R k(t,x)^2 g(x) dx\right|^p dt \right)^{1/p} \lesssim  \tau \|g\|_p,
\]
which finishes he proof of the claimed inequality.

For the final statement we note first that $z\mapsto K(w,z)^2/E(z)^4$ is an element of $H^p(\C^+)$, and the same is true for $K^*(w,z)^2/E(z)^4$. Hence $K(w,z)^2$ is an element of $\mc{H}^p(E^2)$, and it follows that $k(t,z)^2$ is in $\mc{K}_{(E^*)^2/E^2}^p$. The proof for $(x-t)k(t,z)^2$ is analogous. It follows that these functions are in the kernel of the Toeplitz operator for this model space, and integrating in $t$ while observing the norm inequalities proved above shows that $D_\tau g$ and $T_\tau g$ are in the same model space.
\end{proof}

%%
%%  MZ Section

\section{Marcinkiewicz Inequalities}\label{MZ-Section}

Throughout this section $1<p<\infty$ and $\{E_\tau:\tau>0\}$ is a family of Hermite-Biehler functions with no real zeros and phase $\varphi_\tau$ such that 
\begin{align}\label{phi-bound}
\varphi_\tau'(x)\approx \tau.
\end{align}
As mentioned in the introduction, in order to obtain a version of \eqref{lower-MZ} we  start with an upper Marcinkiewicz inequality in $\mc{H}(E_\tau)$, prove a convergence statement for interpolations (which we require for \eqref{convergence-condition-one} as well), and combine those ingredients to obtain a lower Marcinkiewicz inequality.

Since $E_\tau^*/E_\tau = e^{2i\varphi_\tau}$ on $\R$, a result of Dyakonov \cite{Dy94} and \cite[eq.\ (3.1)]{Dy} implies that
\begin{align}\label{derivative-bound}
\| (F/E_\tau)' \|_p \le C_p\tau \|F/E_\tau\|_p.
\end{align}
Furthermore, \eqref{phi-bound} gives $\pi=|\varphi_\tau(t_+) - \varphi_\tau(t)| \approx \tau|t_+-t|$, if $t_+>t$ are consecutive zeros of $B_\tau$. Hence
\begin{align}\label{zero-spacing}
|t_+- t| \approx \tau^{-1}.
\end{align}
We review next an upper Marcinkiewicz inequality, due to Baranov.

\begin{lemma}[cf.\ {\cite[Theorem 5.1]{Bar2}}]\label{baranov-lemma} Let $E_\tau$ satisfy \eqref{phi-bound}. Then for all $F\in \mc{H}^p(E_\tau)$
\[
\frac1\tau \sum_{\lambda\in  \mc{Z}_{\tau}} \left|\frac{F(\lambda)}{E_\tau(\lambda)} \right|^p \lesssim_p  \|F/E_\tau\|_p^p.
\] 
\end{lemma}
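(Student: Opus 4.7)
The plan is to set $f = F/E_\tau$ and convert the discrete sum into a bulk integral via the fundamental theorem of calculus, exploiting the spacing \eqref{zero-spacing} and the derivative bound \eqref{derivative-bound}. Since $E_\tau$ has no real zeros, $f$ is smooth on $\R$, so pointwise values and integration are unproblematic. Enumerate the zeros $\ldots<\lambda_{-1}<\lambda_0<\lambda_1<\ldots$ of $B_\tau$ (the set is discrete and tends to $\pm\infty$ because $\varphi_\tau$ increases by $\pi$ between consecutive zeros and satisfies $\varphi_\tau'\simeq\tau$), and associate to each $\lambda_k$ the interval $I_k$ with endpoints the midpoints of $[\lambda_{k-1},\lambda_k]$ and $[\lambda_k,\lambda_{k+1}]$; then the $I_k$ tile $\R$ and $|I_k|\simeq \tau^{-1}$.

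The key pointwise step is the following: for every $x\in I_k$,
\[
|f(\lambda_k)|^p \le |f(x)|^p + p\int_{I_k}|f(u)|^{p-1}|f'(u)|\,du,
\]
obtained by integrating the absolutely continuous function $u\mapsto |f(u)|^p$ from $x$ to $\lambda_k$ and bounding its derivative by $p|f|^{p-1}|f'|$. Averaging the inequality over $x\in I_k$ and then multiplying by $|I_k|\simeq \tau^{-1}$ yields
\[
\tau^{-1}|f(\lambda_k)|^p \lesssim \int_{I_k}|f|^p\,dx + \tau^{-1}\int_{I_k}|f|^{p-1}|f'|\,du.
\]
Summing over $k$ (the intervals $I_k$ form a partition of $\R$) gives
\[
\tau^{-1}\sum_{\lambda\in \mc{T}_{B_\tau}}|f(\lambda)|^p \lesssim \|f\|_p^p + \tau^{-1}\int_\R |f|^{p-1}|f'|\,du.
\]

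To close the argument, apply Hölder with exponents $p/(p-1)$ and $p$ to the remainder integral and then invoke the Dyakonov-type bound \eqref{derivative-bound}:
\[
\int_\R |f|^{p-1}|f'|\,du \le \|f\|_p^{p-1}\|f'\|_p \lesssim \tau\,\|f\|_p^p,
\]
with constants independent of $\tau$. Substituting back produces the claimed estimate with a $p$-dependent constant.

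The main obstacle I expect is purely bookkeeping: ensuring the interval tiling is well defined at the extreme ends (handled by sending the endpoints to $\pm\infty$, which is legitimate because the zero set accumulates only at infinity under \eqref{phi-bound}), and checking that the derivative bound \eqref{derivative-bound} may legitimately be cited in this generality. The latter requires $E_\tau^*/E_\tau = e^{2i\varphi_\tau}$ to satisfy Dyakonov's hypothesis, and \eqref{phi-bound} is precisely what is needed; once granted, the computation above is routine.
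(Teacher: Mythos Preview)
Your proof is correct and follows essentially the same route as the paper's (alternative) argument: both set $h=F/E_\tau$, compare the pointwise value at a zero $\lambda$ to the average of $|h|^p$ over an adjacent interval of length $\simeq\tau^{-1}$ via the fundamental theorem of calculus, and then absorb the resulting derivative term using the Dyakonov bound \eqref{derivative-bound}. The only cosmetic differences are that the paper takes intervals between consecutive zeros rather than centered intervals, and bounds the remainder by $(y-x)^p\int|h'|^p$ directly rather than by $\int|h|^{p-1}|h'|$ followed by H\"older.
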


\begin{proof} This may be obtained by considering the sum on the left as integration against a Carleson measure with masses at the points of $ \mc{Z}_{\tau}$ and observing that the proof of \cite[Theorem 5.1]{Bar2}  carries over from $p=2$.

Alternatively, following \cite[Theorem 1]{MR}, set $\lambda_+ = \inf\{t \in \R: B_\tau(t)=0\text{ and }t>\lambda\}$. Starting point is the inequality
  \[
  |h(x)|^p(y-x) \le 2^{p-1} \left(\int_x^y |h(u)|^p du + (y-x)^p \int_x^y|h'(u)|^p du\right),
  \]
  valid for all $h\in C^1$ and $x<y$.  We use consecutive zeros of $B_\tau$ for the endpoints and apply this with the $C^\infty(\R)$-function $h=F/E_\tau$. This leads to
\[
  \sum_{\lambda\in  \mc{Z}_{\tau}} \left|\frac{F(\lambda)}{E_\tau(\lambda)}\right|^p \tau^{-1} \le C_p \left(\int_\R \left|\frac{F(x)}{E_\tau(x)}\right|^p dx + \tau^{-p} \int_\R \left| \frac{d}{dx}\left[ \frac{F(x)}{E_\tau(x)}\right]\right|^p dx\right),
\]
 and \eqref{derivative-bound} implies the claim.
\end{proof}

The final statement in this section is a pointwise bound for the series defining $\L_\tau f$ in order to establish when it defines an entire function.

\begin{lemma}\label{uniform-conv} Let $f\in \mc{R}_p(w)$. If \eqref{phi-bound} and conditions (a) and (b) of Definition \ref{Lagrangeadmissibleweight} hold, then the series defining $\L_{\tau}f$ converges uniformly on compact subsets of $\C$, and for $z\in\C$
\[
|\L_\tau f (z)|\le C_p(\|f w\|_p +1) \left(\frac1\tau\sum_{t\in  \mc{Z}_{\tau}} \left|\frac{ B_\tau(z)}{z-t}\right|^q\right)^{1/q}.
\]
\end{lemma}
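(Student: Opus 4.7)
My plan is to bound the series termwise via Hölder's inequality, after first converting the $B_\tau'(t)^{-1}$ factor into something involving $w$. The key auxiliary identity is $|B_\tau'(t)| \simeq \tau|E_\tau(t)|$ for $t\in\mc{T}_{B_\tau}$: this follows from \eqref{Intro_Def_K_AB} (which reduces to $\pi K(t,t)=B_\tau'(t)A_\tau(t)$ at zeros of $B_\tau$), from \eqref{phi-id} and the hypothesis $\varphi_\tau'\simeq \tau$, together with $|E_\tau(t)|=|A_\tau(t)|$ at zeros of $B_\tau$. Combining this with \eqref{uniform-Etau-bound} yields $|B_\tau'(t)|^{-1}\lesssim w(t)/\tau$.

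Once this is in hand, I would estimate
\[
|L_\tau(f,z)|\le \sum_{t\in\mc{T}_{B_\tau}}|f(t)|\,\frac{|B_\tau(z)|}{|B_\tau'(t)|\,|z-t|}\le \frac{C}{\tau}\sum_{t\in\mc{T}_{B_\tau}}|f(t)|w(t)\cdot \frac{|B_\tau(z)|}{|z-t|},
\]
and apply Hölder with conjugate exponents $p$ and $q$. Writing the prefactor as $\tau^{-1}=\tau^{-1/p}\tau^{-1/q}$, the right-hand side factors exactly into
\[
C\Bigl(\tfrac{1}{\tau}\sum_t|f(t)w(t)|^p\Bigr)^{1/p}\Bigl(\tfrac{1}{\tau}\sum_t|B_\tau(z)/(z-t)|^q\Bigr)^{1/q},
\]
so it remains to show the first factor is $\lesssim \|fw\|_p+1$. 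Since consecutive zeros of $B_\tau$ are spaced $\simeq\tau^{-1}$ by \eqref{zero-spacing}, the sum $\tau^{-1}\sum_t|f(t)w(t)|^p$ is comparable to a Riemann sum for $\int|fw|^p$; because $|fw|^p$ has finite improper Riemann integral (this is exactly the hypothesis $f\in\mc{R}_p(w)$), these Riemann sums are bounded by $\|fw\|_p^p$ plus an error absorbed into the additive constant, which is the source of the ``$+1$''.

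For the uniform convergence claim on compact sets $K\subset\C$, I would apply the same Hölder estimate to the tail $|t|>R$: the factor $(\sum_{|t|>R}|B_\tau(z)/(z-t)|^q)^{1/q}$ is uniformly small in $z\in K$ because $B_\tau$ is entire and $|z-t|^{-q}$ is summable ($q>1$) with tails going to zero as $R\to\infty$, while the corresponding tail of the Riemann sum in $f$ is small by improper integrability. Since $z\mapsto B_\tau(z)/(z-t)$ is entire in $z$, each partial sum is entire, and uniform convergence on compacta identifies $L_\tau f$ as an entire function.

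The main obstacle is the Riemann-sum step: for merely improperly Riemann integrable $g=|fw|^p$, one does not have monotone convergence of Riemann sums, so some care is needed to bound $\tau^{-1}\sum_t g(t)$ uniformly in $\tau$. I expect this is handled by splitting $\R=[-R,R]\cup(\R\setminus[-R,R])$, bounding the central part by a standard Riemann-sum estimate once $R$ is chosen so the tail integral is at most $1$, and using the uniform spacing of the nodes to absorb the resulting error into the additive constant.
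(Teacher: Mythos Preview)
Your proposal is correct and follows essentially the same route as the paper's proof: the paper multiplies and divides by $A_\tau(t)$ and uses $\varphi_\tau'(t)=B_\tau'(t)/A_\tau(t)$ together with $|A_\tau(t)|=|E_\tau(t)|$ on $\mc{T}_{B_\tau}$ before applying H\"older, which is exactly your identity $|B_\tau'(t)|\simeq\tau|E_\tau(t)|$ in disguise. The paper handles the Riemann-sum step with the same brevity you anticipated (``bounded by a Riemann sum for $\|fw\|_p$''), so your concern and proposed splitting are reasonable but not something the paper elaborates on either.
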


\begin{proof}  We drop subscripts $\tau$. Let $z$ be in a compact subset $\Gamma$ of $\C\backslash \mc{Z}_{B}$. It follows from  \eqref{phi-id} that $\varphi'(t) = A(t)^{-1}B'(t)$ for $t\in \mc{Z}_B$, hence multiplying and dividing by $A(t)$ gives
\begin{align*}
|\L_\tau f(z)| &= \left|\sum_{t\in \mc{Z}_B} \frac{f(t)B(z)}{B'(t)(z-t)}\right|\\
&\le \left(\sum_{t\in \mc{Z}_{B}}\frac{1}{\varphi'(t)} \left| \frac{f(t)}{A(t)}\right|^p \right)^{1/p} \left(|B(z)|^q\sum_{t\in \mc{Z}_{B}}\frac{1}{\varphi'(t)}\left|\frac{ 1}{z-t}\right|^q\right)^{1/q}.
\end{align*}
Since $E = A$ on $\mc{Z}_{B}$, the first term is comparable to the Riemann sum of $\|f w\|_p$. It follows from \eqref{phi-bound} and \eqref{derivative-bound} that the second series converges uniformly and absolutely for $z\in \Gamma$.  Since the singularities at $\mc{Z}_B$ are removable,  $\L_\tau f$ defines an entire function. 
\end{proof}

%% Lower MZ inequality
%%

\subsection{Lagrange Interpolation} We prove next a version of \eqref{lower-MZ}. We remark that inequalities of this type are known for considerably more general measures, cf.\ Volberg \cite[Theorem 2]{V}, if the constant is not required to depend explicitly on $\tau$.

 \begin{proposition}\label{HpE-rep} Let $E_\tau$ satisfy \eqref{phi-bound} and $B_\tau\notin \mc{H}^p(E_\tau)$. Then for $F\in \mc{H}^p(E_\tau)$
\begin{align}\label{HpE-interpolation}
 F(z) = \sum_{t\in  \mc{Z}_{\tau}} F(t) \frac{K_\tau(t,z)}{K_\tau(t,t)}
 \end{align}
  in $\mc{H}^p(E_\tau)$ and uniformly on compact subsets of $\C$. Moreover,
\begin{align}\label{lower-MZ-inequ}
\|F/E_\tau\|_p^p \lesssim_p \frac{1}{\tau} \sum_{t\in \mc{Z}_B} \left|\frac{F(t)}{E_\tau(t)}\right|^p.
\end{align}
 \end{proposition}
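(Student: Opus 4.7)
The proof splits naturally into (a) the interpolation identity $F=L_\tau F$ for $F\in\mc{H}^p(E_\tau)$, and (b) the lower Marcinkiewicz inequality \eqref{lower-MZ-inequ}. For (a), Lemma \ref{baranov-lemma} ensures $(F(t)/E_\tau(t))_{t\in\mc{T}_{B_\tau}}\in\ell^p$, and Lemma \ref{uniform-conv} then guarantees that $L_\tau F$ defines an entire function with uniform convergence on compact subsets of $\C$. Since each kernel $K_\tau(t,\cdot)/K_\tau(t,t)=B_\tau(\cdot)/[B_\tau'(t)(\cdot-t)]$ already lies in $\mc{H}^p(E_\tau)$, the plan is to show that the limit $L_\tau F$ itself belongs to $\mc{H}^p(E_\tau)$. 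Setting $G:=F-L_\tau F$, the entire function $G$ then lies in $\mc{H}^p(E_\tau)$ and vanishes at every $t\in\mc{T}_{B_\tau}$, so $G/B_\tau$ is entire; a de Branges--style factorization together with the hypothesis $B_\tau\notin\mc{H}^p(E_\tau)$ forces $G\equiv 0$.

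For (b), I would run the duality argument announced in the introduction. Write
\[
\|F/E_\tau\|_p = \sup_{\|h\|_q=1}\left|\int_\R \frac{F(x)h(x)}{E_\tau(x)}\,dx\right|,
\]
insert the identity from (a), and exchange sum and integral to obtain
\[
\int_\R \frac{F(x)h(x)}{E_\tau(x)}\,dx = \sum_{t\in\mc{T}_{B_\tau}} \frac{F(t)}{K_\tau(t,t)}\,H(t),\quad H(t) := \int_\R K_\tau(t,x)\,\frac{h(x)}{E_\tau(x)}\,dx.
\]
The key observation is that $H=S_{E_\tau}(\overline{E_\tau}h)$, so Lemma \ref{SEf-lemma} gives $H\in\mc{H}^q(E_\tau)$ with $\|H/E_\tau\|_q\lesssim\|h\|_q$, and Lemma \ref{baranov-lemma} applied in $\mc{H}^q(E_\tau)$ yields $\sum_t|H(t)/E_\tau(t)|^q\lesssim\tau\|h\|_q^q$. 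Combining this with $K_\tau(t,t)\simeq\tau|E_\tau(t)|^2$ from \eqref{phi-id} and \eqref{phi-bound}, and applying H\"older in $\ell^p$-$\ell^q$ to $\sum_t |F(t)||H(t)|/K_\tau(t,t)$, produces the bound $C\tau^{-1/p}\bigl(\sum_t|F(t)/E_\tau(t)|^p\bigr)^{1/p}\|h\|_q$; taking the supremum over $h$ yields \eqref{lower-MZ-inequ}.

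The main obstacle will be part (a). For $p=2$ the identity reduces to classical de Branges sampling in $\mc{H}^2(E_\tau)$, but for general $p\in(1,\infty)$ one must establish convergence of the partial sums in $\mc{H}^p(E_\tau)$ without invoking \eqref{lower-MZ-inequ} (which would be circular). The delicate step is ruling out that $G$ is a nonzero scalar multiple of $B_\tau$: the membership $G\in\mc{H}^p(E_\tau)$ controls the boundary behavior of $G/B_\tau$ on horizontal lines, and a Phragm\'en--Lindel\"of/Krein-type argument combined with $B_\tau\notin\mc{H}^p(E_\tau)$ should force the quotient to vanish identically.
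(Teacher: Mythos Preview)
Your outline is essentially the paper's proof; part (b) in particular matches almost verbatim (the paper dualizes against $G\in\mc{H}^q(E_\tau)$ and uses the reproducing property directly, but explicitly notes your $S_{E_\tau}$ variant as an equivalent alternative). Two points of comparison are worth recording.

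First, your ``circularity'' worry in part (a) dissolves once you notice that the duality computation you set up for (b) applies equally well to the partial sums $F_k-F_m$ (finite linear combinations of kernels, hence certainly in $\mc{H}^p(E_\tau)$). Since $(F_k-F_m)(t)=F(t)$ for $k<|t|\le m$ and vanishes otherwise on $\mc{T}_{B_\tau}$, the same H\"older/Lemma~\ref{baranov-lemma} step gives
\[
\|(F_k-F_m)/E_\tau\|_p \;\lesssim\; \Bigl(\tfrac{1}{\tau}\textstyle\sum_{k<|t|\le m}|F(t)/E_\tau(t)|^p\Bigr)^{1/p}\to 0,
\]
so the partial sums are Cauchy in $\mc{H}^p(E_\tau)$ and $L_\tau F\in\mc{H}^p(E_\tau)$ follows. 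The paper runs exactly this argument; nothing circular is invoked because one never needs $F=L_\tau F$ to bound $F_k-F_m$.

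Second, for the identification of the limit the paper takes a cleaner route than your proposed Phragm\'en--Lindel\"of argument on $G/B_\tau$. It fixes $w\notin\R$ with $F(w)\neq 0$ and observes that
\[
G_w(z)=\frac{F(z)B_\tau(w)-B_\tau(z)F(w)}{z-w}
\]
is entire and lies in $\mc{H}^2(E_\tau)$ (regardless of $p$), so the classical $p=2$ de Branges sampling applies to $G_w$. Rewriting gives $F(z)=\sum_t F(t)B_\tau(z)/[B_\tau'(t)(z-t)]+c_F B_\tau(z)$ for some constant $c_F$; since the series is already known to converge in $\mc{H}^p(E_\tau)$ and $B_\tau\notin\mc{H}^p(E_\tau)$, one reads off $c_F=0$. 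This sidesteps any growth analysis of $G/B_\tau$ in the half-plane. Your factorization approach should also work, but the reduction to $\mc{H}^2$ is shorter and uses only tools already on the table.
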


\begin{proof} Let $1<p<\infty$. We drop the subscript $\tau$ throughout this proof.  Let $F\in \mc{H}^p(E)$. Let $F_k$ be the partial sum  of the series in \eqref{HpE-interpolation} using the summands with $|t|\le k$. This is an element of $\mc{H}^p(E)$, and we show first that it forms a Cauchy sequence in this space. It is known\footnote{For model spaces on the unit disk this may be found in \cite[Lemma 4.2]{Co}, and the proof for the upper half plane is analogous.} that $\mc{K}^q_{E^*/E}$ is norm equivalent to the dual space of $\mc{K}_{E^*/E}^p$ (here $p^{-1} + q^{-1} =1$) and one can show $\mc{H}^p(E)'=\mc{H}^q(E)$.  This gives
\begin{align}\label{sup-rep}
\left\|\frac{F_k-F_m}{E}\right\|_p \lesssim_p  \sup \left|\int_\R \frac{F_k(x)-F_m(x)}{E(x)} \frac{\overline{G(x)}}{\overline{E(x)}} dx \right|
\end{align}
where the supremum is taken over all $G \in \mc{H}^q(E)$ with $\|G/E\|_q=1$. It follows from \eqref{dBreproduction} that
\begin{align*}
\int_\R \frac{F_k(x)-F_m(x)}{E(x)} \frac{\overline{G(x)}}{\overline{E(x)}} dx &= \sum_{\substack{t\in \mc{Z}_B \\ k<|t|\le m}} \frac{F(t)}{K(t,t)} \int_\R \frac{K(t,x) \overline{G(x)}}{|E(x)|^2} dx \\
& = \sum_{\substack{t\in \mc{Z}_B \\ k<|t|\le m}} \frac{F(t) \overline{G(t)}}{K(t,t)}.
\end{align*}
By \eqref{phi-id} we have $K(t,t) = \varphi'(t)|E(t)|^2$. Using H\"older's inequality and dropping the restriction on $t$ in the last series of the following inequality gives
\begin{align}\label{uniform-bounds} 
 \left|\sum_{\substack{ t\in \mc{Z}_B \\ k<|t|\le m}} \frac{F(t)\overline{G(t)}}{\varphi'(t)|E(t)|^2} \right| 
  \le \left( \sum_{\substack{ t\in \mc{Z}_B \\ k<|t|\le m}} \frac{1}{\varphi'(t)} \left|\frac{F(t)}{E(t)}\right|^p\right)^{\frac{1}{p}} \left( \sum_{t\in \mc{Z}_B} \frac{1}{\varphi'(t)} \left|\frac{G(t)}{E(t)}\right|^q\right)^{\frac1q}.
\end{align}
The assumption \eqref{phi-bound} and Lemma \ref{baranov-lemma} lead to
 \[
\left( \sum_{t\in \mc{Z}_B} \frac{1}{\varphi'(t)} \left|\frac{G(t)}{E(t)}\right|^q\right)^{1/q} \lesssim_q \|G/E\|_q =1.
\]
Since the constants do not depend on $G$, the inequalities hold for the supremum in \eqref{sup-rep} as well, and it follows that $F_k$ converges in $\mc{H}^p(E)$. Alternatively, without using the dual space representation, the function $G$ in \eqref{sup-rep} may be replaced by any $h$ with $h/E\in L^q(\R)$. This leads to the use of $S_Eh$ in place of $G$ and an application of Lemma \ref{SEf-lemma} in the last step.

We show next that the limit of $F_k$ is $F$. It follows from Lemma \ref{uniform-conv} that the series converges uniformly in compact subsets of $\C$ and hence defines an entire function. To show that it represents $F$ fix $w\notin \R$ with $F(w)\neq 0$ and note that $G_w$ defined by
\[
G_w(z) = \frac{F(z) B(w) - B(z) F(w)}{z-w}
\]
is entire and an element of $\mc{H}^2(E)$. It follows from the theory of de Branges spaces for $p=2$ (cf.\ the proof of \cite[Theorem 22]{dB})  that the representation \eqref{HpE-interpolation} holds for $G_w$, and this may be rewritten as
\[
\frac{F(z)}{B(z)} - \frac{F(w)}{B(w)} = \sum_{t\in \mc{Z}_B} \frac{F(t)}{B'(t)} \left[\frac{1}{z-t}+\frac{1}{t-w}\right].
\]
Hence for some constant $c_F$
\[
F(z) =  \sum_{t\in \mc{Z}_B} \frac{F(t) B(z)}{B'(t)(z-t)} + c_F B(z),
\]
and as we had seen, the series converges in $\mc{H}^p(E)$. Since by assumption $B\notin \mc{H}^p(E)$, we must have $c_F=0$.

Finally, returning to \eqref{sup-rep}, if we replace $F_k-F_m$ by $F$  then the same calculations lead to \eqref{uniform-bounds} without the restriction $k<|t|\le m$. Raising the resulting inequalities to the $p$th power and using \eqref{phi-bound} leads to \eqref{lower-MZ-inequ}.
\end{proof}

\subsection{Hermite Interpolation} The interpolation with derivatives requires the following version of \eqref{lower-MZ}.

\begin{proposition}\label{prop:der-lower-MZineq}
Let $E_\tau$ satisfy \eqref{phi-bound} and $B_\tau^2\notin\mc{H}^p(E_\tau^2)$. Then for $F\in \mc{H}^p(E_\tau^2)$
\[
F(z) =  \sum_{t\in \mc{Z}_\tau} \left[F(t) \frac{K_\tau(t,z)^2}{K_\tau(t,t)^2}\left( 1 - 2 \frac{K_\tau'(t,t)(z-t)}{K_\tau(t,t)}\right) + F'(t) \frac{K_\tau(t,z)^2(z-t)}{K_\tau(t,t)^2}\right]
\]
in $\mc{H}^p(E_\tau^2)$ and uniformly on compact subsets of $\C$, and
\[
\|F/E_\tau^2\|_p  \lesssim_p  \left( \frac{1}\tau \sum_{t\in\mc{Z}_\tau}\left|\frac{F(t)}{E_\tau(t)^2}\right|^p\right)^{1/p} + \frac{1}\tau  \left( \frac{1}\tau \sum_{t\in\mc{Z}_\tau}\left|\frac{F'(t)}{E_\tau(t)^2}\right|^p\right)^{1/p}.
\]
\end{proposition}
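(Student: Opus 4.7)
The plan is to mirror the proof of Proposition~\ref{HpE-rep}, replacing the Cauchy kernel $K(t,z)/K(t,t)$ by the Hermite kernels $U_E(t,z),V_E(t,z)$, the projection $S_E$ by the operators $D_E,T_E$ of Lemma~\ref{projection-bounds}, and $\mc{H}^p(E)$ by $\mc{H}^p(E^2)$. The essential preliminary observation is that $E^2$ is itself Hermite--Biehler with phase $2\varphi$, so the hypothesis $\varphi'\simeq\tau$ transfers to $E^2$; in particular Lemma~\ref{baranov-lemma} becomes available for $\mc{H}^q(E^2)$, with Carleson-type sums running over the zero set $\mc{T}_{B_{E^2}}$ of $B_{E^2}=2AB$, which contains $\mc{T}_B$.

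Let $F_k$ denote the partial sum of the Hermite series over $|t|\le k$. I would bound $\|(F_k-F_m)/E^2\|_p$ by duality $\mc{K}^p_{(E^*/E)^2}\simeq(\mc{K}^q_{(E^*/E)^2})^*$, writing it as a supremum of $\bigl|\int(F_k-F_m)\overline{G}/|E|^4\,dx\bigr|$ over $G\in\mc{H}^q(E^2)$ with $\|G/E^2\|_q=1$. Setting $g=\overline{G/E^2}\in L^q$, and using the identity $K(t,x)^2/E(x)^2=E^*(t)^2k(t,x)^2$ (valid for real $t,x$) together with $K(t,t)=\varphi'(t)|E(t)|^2/\pi$, this pairing rearranges into
\[
\sum_{k<|t|\le m}\frac{\pi^2}{\varphi'(t)^2\,E(t)^2}\bigl[F(t)\,D_Eg(t)+F'(t)\,T_Eg(t)\bigr].
\]
H\"older's inequality separates the $F$- and $F'$-factors from the dual factors $D_Eg(t)$, $T_Eg(t)$. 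Lemma~\ref{projection-bounds} yields $\|D_Eg\|_q\lesssim\tau$ and $\|T_Eg\|_q\lesssim 1$, and places these functions in the model space attached to $E^2$; Lemma~\ref{baranov-lemma} applied to $E^2$ then converts those $L^q$-norms into discrete sums over $\mc{T}_{B_{E^2}}\supseteq\mc{T}_B$, giving $\bigl(\sum|D_Eg(t)|^q\bigr)^{1/q}\lesssim\tau^{1+1/q}$ and $\bigl(\sum|T_Eg(t)|^q\bigr)^{1/q}\lesssim\tau^{1/q}$. Combined with $\varphi'(t)^{-2}\simeq\tau^{-2}$, the powers of $\tau$ balance to produce exactly the advertised contributions $\bigl(\tau^{-1}\sum|F(t)/E(t)^2|^p\bigr)^{1/p}$ and $\tau^{-1}\bigl(\tau^{-1}\sum|F'(t)/E(t)^2|^p\bigr)^{1/p}$.

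Letting $k,m\to\infty$ shows $(F_k)$ is Cauchy in $\mc{H}^p(E^2)$; uniform convergence on compacta follows along the lines of Lemma~\ref{uniform-conv}. To identify the limit as $F$ I would mimic the Lagrange argument: for $w\notin\R$ with $F(w)\ne 0$, construct an auxiliary $G_w\in\mc{H}^2(E^2)$ encoding both the value and the derivative of $F$ at $w$ (dividing by $(z-w)^2$ rather than $z-w$, and subtracting a Taylor-like correction to cancel the pole), apply the $p=2$ Hermite identity of \cite{Gon,GL} to $G_w$, and rearrange to obtain $F(z)=(\text{Hermite sum})+c_FR(z)$ with $R$ a concrete multiple of $B$; the hypothesis $B\notin\mc{H}^p(E)$, together with the fact that the series already lies in $\mc{H}^p(E^2)$, then forces $c_F=0$. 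Running the core estimate with $F$ in place of $F_k-F_m$ delivers the stated lower Marcinkiewicz inequality.

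The main technical obstacle is the bookkeeping in the second paragraph: rewriting the pairing integral cleanly in terms of $D_Eg$ and $T_Eg$ and then tracking the powers of $\tau$ so that H\"older combined with Lemma~\ref{baranov-lemma} balance out to produce precisely one factor of $\tau^{-1/p}$ for the value-sum and an additional factor $\tau^{-1}$ for the derivative-sum. A secondary subtlety is the construction of $G_w$ so that it belongs to $\mc{H}^2(E^2)$ and simultaneously encodes $F(w)$ and $F'(w)$ in a way that yields the $p=2$ representation after rearrangement.
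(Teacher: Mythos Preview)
Your approach is essentially the paper's: duality in $\mc{K}^p_{(E^*/E)^2}$, rewriting the pairing via $D_E$ and $T_E$, then Lemma~\ref{projection-bounds} combined with Lemma~\ref{baranov-lemma} for $E^2$ to balance the powers of $\tau$, and finally rerunning the estimate with $F$ in place of $F_k-F_m$. The only difference is in identifying the limit: the paper simply invokes \cite{Gon} for uniform convergence of the Hermite series to $F$ on compact sets (which, together with the $\mc{H}^p(E^2)$-Cauchy property, pins down the limit), bypassing your proposed $G_w$ construction and the attendant subtlety entirely.
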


\begin{proof}
 Following the strategy of the previous section, we express $\|F/E_\tau^2\|_p$ using duality, plug in the partial sums of the proposed interpolating series $\H_{\tau} f$, change summation, and obtain sums of certain integral transforms that are bounded using Lemma \ref{projection-bounds}.  The proof follows the same lines with only few modifications and we leave the details to reader, but we mention that the necessary local convergence result of the interpolating series was proved in \cite{Gon}. 
 \end{proof}

\section{Proofs of the Main Results}\label{proof-Section}

\begin{proof}[\bf Proof of Theorem \ref{intro-thm1}]  Let $f\in \mc{R}_p(w)$. It follows from Lemma \ref{uniform-conv} that $\L_\tau f$ defines an entire function. The partial sums $L_k$ of $\L_\tau f$ are in $\mc{H}^p(E_\tau)=\mc{B}^p(\tau,w)$. Since $L_k(t) = f(t)$ for  $|t|\le k$ and $L_k(t) =0$ otherwise for $t\in  \mc{Z}_{\tau}$, we have
$$
\|(L_k-L_n)/E_\tau\|_p^p \lesssim_p \frac{1}{\tau} \sum_{\substack{t\in  \mc{Z}_{\tau} \\ k<|t|\le n}} \left| f(t) w(t)\right|^p.
$$
The right hand side is a partial sum of a  convergent Riemann sum, so $\L_\tau f$ defines an element of $\mc{H}^p(E_\tau)$, which equals $\mc{B}^p(\tau,w)$. We can then apply Proposition \ref{HpE-rep} to obtain the desired lower Marcinkiewicz-Zygmund inequality \eqref{lower-MZ} for $\mc{B}^p(\tau,w)$ and $\Lambda_\tau=\mc{Z}_\tau$, and the remaining part of the proof follows from the argument presented thereafter.
\end{proof}

\begin{proof}[\bf Proof of Theorem \ref{intro-thm2}] 
 Let $f\in \mc{R}^1_p(w)$. The proof that each series in the definition of $\H_\tau f$ converges uniformly on compact subsets of $\C$ and hence defines an entire function is a calculation analogous to Lemma \ref{uniform-conv} (note that $K_\tau'(t,t)/K_\tau(t,t)\le C\tau$ by the proof of Lemma \ref{projection-bounds}). The proof can be finished by a very similar argument to the proof of Theorem \ref{intro-thm2}, but now we need to apply Proposition \ref{prop:der-lower-MZineq} and the fact that $\H^p(E_\tau^2)$ is closed under differentiation, we leave the details to the reader.
\end{proof}

\begin{proof}[\bf Proof of Corollary \ref{homog-theorem}]
In what follows we will use the results of Lemma \ref{Enu-properties} and we omit $\alpha$. Let $w_\tau(x)=\max(\tau^{-1},|x|)^{\nu+1/2}$ for $\tau>0$ and $w_\infty(x)=|x|^{\nu+1/2}$. Consider the family of Hermite-Biehler functions $\{E_{\nu,\tau}\}_{\tau>0}$. Since $|E_{\nu,\tau}^{-1}|\approx w_\tau$,  this family satisfies all conditions of Definition \ref{Lagrangeadmissibleweight} if we replace $w$ by $w_\tau$ and the proof of Theorem \ref{intro-thm1} can be replicated line by line to show $\|(f-\L_{\nu,\tau} f)w_\tau\|_p\to 0$ as $\tau\to\infty$ and $\L_{\nu,\tau} f  \in \mc{H}(E_{\nu,\tau})$. Hence $\L_{\nu,\tau} f$ has exponential type at most one and $w_\infty \L_{\nu,\tau} f\in L^p(\R)$. 
Since $w_\tau\geq w_\infty$ for $\nu\geq -1/2$ we obtain
\begin{align*}
\|(f-\L_{\nu,\tau} f)(w_\tau-w_\infty)\|_p \leq \|(f-\L_{\nu,\tau} f)w_\tau\|_p \to 0,
\end{align*}
hence $\|(f-\L_{\nu,\tau} f)w_\infty\|_p^p\to 0$, which finishes the proof in the case $\nu\geq -1/2$.

For $\nu<-\frac12$ we have instead
\begin{align*}
\|(f-\L_{\nu,\tau} f)(w_\tau-w_\infty)\|_p  \leq \|fw_\infty\|_{L^p([-\tau^{-1},\tau^{-1}])} + \|\L_{\nu,\tau} fw_\infty\|_{L^p([-\tau^{-1},\tau^{-1}])}.
\end{align*}
The integral of $|f w_\infty|^p$ restricted to $[-\tau^{-1},\tau^{-1}]$ converges to zero so we need to analyze the contribution from $\L_{\nu,\tau} f w_\infty$. A scaling argument in the inequality of Lemma \ref{uniform-conv} may be used to show $|\L_{\nu,\tau}f(x)| \lesssim \tau^{1-\frac1q -|\nu+\frac12|},$ and hence 
\begin{align}\label{LSizeAtZero}
|\L_{\nu,\tau} f(x)/E_{\nu,\tau}(x)|^p \lesssim \tau
\end{align}
for $|x|\le \tau^{-1}$. Since the integral of $|f w_\infty|^p$ and hence of $|f /E_{\nu,\tau}|^p$ converges to zero, we also have the limit relation $\|\L_{\nu,\tau} f/E_{\nu,\tau}\|_{L^p([-\tau^{-1},\tau^{-1}])}\to 0$ as $\tau\to \infty$. Defining for $\varepsilon>0$
\[
X_{\tau,\varepsilon} = \{|x|\le \tau^{-1}: \varepsilon\tau \le |\L_{\nu,\tau}f(x)/E_{\nu,\tau}(x)|^p\},
\]
we split the integral of $|\L_{\nu,\tau} f w_\infty|^p$ on $[-\tau^{-1},\tau^{-1}]$ into the integral over $X_{\tau,\varepsilon}$ and its complement, utilize on the complement  the estimate $$|\L_{\nu,\tau} f(x) w_\infty(x)|^p < \varepsilon \tau |E_{\nu,\tau}(x) w_\infty(x)|^p,$$ combine this with $|E_{\nu,\tau}(x)|^p \lesssim \tau^{-p|\nu+\frac12|}$, use \eqref{LSizeAtZero} on $X_{\tau,\varepsilon}$, and observe that due to the shape of $w_\infty$ the contribution from $X_{\tau,\varepsilon}$ is largest if this set is an interval with center at the origin (we leave the details to the reader). This finishes the proof. 
\end{proof}

 %%
%% Meromorphic measures

%%
%%  Bibilography
%%

 \end{document}